\documentclass[10pt,oneside]{amsart}

\usepackage[
paper=a4paper,
headsep=15pt,text={135mm,216mm},centering
]{geometry}

\usepackage[bookmarks]{hyperref}
\usepackage{amssymb,amsxtra}
\usepackage{graphicx,xcolor,mathrsfs}
\usepackage{float,array,calc,booktabs,stackrel,url}
\usepackage{pb-diagram} 
\usepackage{enumitem}\setlist[enumerate,1]{font=\upshape}\setlist[enumerate,2]{font=\upshape}
\usepackage{mathtools}
\usepackage{tikz}
\usetikzlibrary{
  cd,
  calc,
  positioning,
  arrows,
  decorations.pathreplacing,
  decorations.markings,
}
\usepackage[mode=buildnew]{standalone}

\definecolor{todo-background-color}{gray}{0.95}
\usepackage[
  textwidth=1.1in,
  backgroundcolor=todo-background-color,
  bordercolor=black,
  linecolor=black,
  textsize=footnotesize
]{todonotes}

\iftrue
    
    \makeatletter
    \def\@settitle{%
      \vspace*{-10pt}
      \begin{flushleft}%
        \LARGE\bfseries
        \strut\@title\strut
      \end{flushleft}%
    }
    \def\@setauthors{%
      \begingroup
      \def\thanks{\protect\thanks@warning}%
      \trivlist
      \raggedright
      \large \@topsep27\p@\relax
      \advance\@topsep by -\baselineskip
    \item\relax
      \author@andify\authors
      \def\\{\protect\linebreak}%
      \authors
      \ifx\@empty\contribs
      \else
      ,\penalty-3 \space \@setcontribs
      \@closetoccontribs
      \fi
      \normalfont
      \endtrivlist
      \endgroup
    }
    \def\@setaddresses{\par
      \nobreak \begingroup
      \small\raggedright
      \def\author##1{\nobreak\addvspace\smallskipamount}%
      \def\\{\unskip, \ignorespaces}%
      \interlinepenalty\@M
      \def\address##1##2{\begingroup
        \par\addvspace\bigskipamount\noindent
        \@ifnotempty{##1}{(\ignorespaces##1\unskip) }%
        {\ignorespaces##2}\par\endgroup}%
      \def\curraddr##1##2{\begingroup
        \@ifnotempty{##2}{\nobreak\noindent\curraddrname
          \@ifnotempty{##1}{, \ignorespaces##1\unskip}\/:\space
          ##2\par}\endgroup}%
      \def\email##1##2{\begingroup
        \@ifnotempty{##2}{\nobreak\noindent E-mail address%
          \@ifnotempty{##1}{, \ignorespaces##1\unskip}\/:\space
          \ttfamily##2\par}\endgroup}%
      \def\urladdr##1##2{\begingroup
        \def~{\char`\~}%
        \@ifnotempty{##2}{\nobreak\noindent\urladdrname
          \@ifnotempty{##1}{, \ignorespaces##1\unskip}\/:\space
          \ttfamily##2\par}\endgroup}%
      \addresses
      \endgroup
      \global\let\addresses=\@empty
    }
    \def\@setabstracta{%
      \ifvoid\abstractbox
      \else
      \skip@17pt \advance\skip@-\lastskip
      \advance\skip@-\baselineskip \vskip\skip@
      \box\abstractbox
      \prevdepth\z@ 
      \vskip-28pt
      \fi
    }
    \renewenvironment{abstract}{%
      \ifx\maketitle\relax
      \ClassWarning{\@classname}{Abstract should precede
        \protect\maketitle\space in AMS document classes; reported}%
      \fi
      \global\setbox\abstractbox=\vtop \bgroup
      \normalfont\small
      \list{}{\labelwidth\z@
        \leftmargin0pc \rightmargin\leftmargin
        \listparindent\normalparindent \itemindent\z@
        \parsep\z@ \@plus\p@
        
      }%
    \item[\hskip\labelsep\bfseries\abstractname.]%
    }{%
      \endlist\egroup
      \ifx\@setabstract\relax \@setabstracta \fi
    }

    \def\ps@headings{\ps@empty
      \def\@evenhead{%
        \setTrue{runhead}%
        \normalfont\scriptsize
        \rlap{\thepage}\hfill
        \def\thanks{\protect\thanks@warning}%
        \leftmark{}{}}%
      \def\@oddhead{%
        \setTrue{runhead}%
        \normalfont\scriptsize
        \def\thanks{\protect\thanks@warning}%
        \rightmark{}{}\hfill \llap{\thepage}}%
      \let\@mkboth\markboth
    }\ps@headings

    \def\section{\@startsection{section}{1}%
      \z@{-1.4\linespacing\@plus-.5\linespacing}{.8\linespacing}%
      {\normalfont\bfseries\Large}}
    \def\subsection{\@startsection{subsection}{2}%
      \z@{-.8\linespacing\@plus-.3\linespacing}{.5\linespacing\@plus.2\linespacing}%
      {\normalfont\bfseries\large}}
    \def\subsubsection{\@startsection{subsubsection}{3}%
      \z@{.7\linespacing\@plus.2\linespacing}{-1.5ex}%
      {\normalfont\itshape}}
    \def\paragraph{\@startsection{paragraph}{4}%
      \z@{.7\linespacing\@plus.2\linespacing}{-1.5ex}%
      {\normalfont\itshape}}
    \def\@secnumfont{\bfseries}

    \renewcommand\contentsnamefont{\bfseries}
    \def\@starttoc#1#2{\begingroup
      \setTrue{#1}%
      \par\removelastskip\vskip\z@skip
      \@startsection{}\@M\z@{\linespacing\@plus\linespacing}%
      {.5\linespacing}{
        \contentsnamefont}{#2}%
      \ifx\contentsname#2%
      \else \addcontentsline{toc}{section}{#2}\fi
      \makeatletter
      \@input{\jobname.#1}%
      \if@filesw
      \@xp\newwrite\csname tf@#1\endcsname
      \immediate\@xp\openout\csname tf@#1\endcsname \jobname.#1\relax
      \fi
      \global\@nobreakfalse \endgroup
      \addvspace{32\p@\@plus14\p@}%
      \let\tableofcontents\rela\x
    }
    \def\contentsname{Contents}
    \def\l@section{\@tocline{2}{.5ex}{0mm}{5pc}{}}
    \def\l@subsection{\@tocline{2}{0pt}{2em}{5pc}{}}
    \makeatother

\fi

\def\to{\mathchoice{\longrightarrow}{\rightarrow}{\rightarrow}{\rightarrow}}
\makeatletter
\newcommand{\shortxra}[2][]{\ext@arrow 0359\rightarrowfill@{#1}{#2}}
\def\longrightarrowfill@{\arrowfill@\relbar\relbar\longrightarrow}
\newcommand{\longxra}[2][]{\ext@arrow 0359\longrightarrowfill@{#1}{#2}}
\renewcommand{\xrightarrow}[2][]{\mathchoice{\longxra[#1]{#2}}%
  {\shortxra[#1]{#2}}{\shortxra[#1]{#2}}{\shortxra[#1]{#2}}}
\makeatother

\makeatletter
\def\addtagsub#1{\let\oldtf=\tagform@\def\tagform@##1{\oldtf{##1}\hbox{$_{#1}$}}}
\makeatother

\makeatletter
\def\Nopagebreak{\@nobreaktrue\nopagebreak}
\makeatother

\makeatletter
\newtheoremstyle{theorem-giventitle}
        {}{}              
        {\itshape}                      
        {}                              
        {\bfseries}                     
        {.}                             
        {\thm@headsep}                             
        {\thmnote{\bfseries#3}}
\newtheoremstyle{theorem-givenlabel}
        {}{}              
        {\itshape}                      
        {}                              
        {\bfseries}                     
        {.}                             
        {\thm@headsep}                             
        {\thmname{#1}~\thmnumber{#3}\setcurrentlabel{#3}}
\newtheoremstyle{definition-giventitle}
        {}{}              
        {}                      
        {}                              
        {\bfseries}                     
        {.}                             
        {\thm@headsep}                             
        {\thmnote{\bfseries#3}}
\def\setcurrentlabel#1{\gdef\@currentlabel{#1}}
\makeatother

\newtheorem{theorem}{Theorem}[section]

\newtheorem{proposition}[theorem]{Proposition}
\newtheorem{corollary}[theorem]{Corollary}
\newtheorem{lemma}[theorem]{Lemma}

\newtheorem{question}[theorem]{Question}

\theoremstyle{definition}

\newtheorem{remark}[theorem]{Remark}

\newtheorem*{case2'}{Case 2$'$}

\theoremstyle{theorem-giventitle}
\newtheorem{theorem-named}{}
\theoremstyle{theorem-givenlabel}
\newtheorem{theorem-labeled}{Theorem}

\theoremstyle{definition-giventitle}
\newtheorem{definition-named}{}
\newtheorem{conjecture-named}{}
\newtheorem{case-named}{}

\numberwithin{equation}{section}


\def\Z{\mathbb{Z}}
\def\R{\mathbb{R}}
\def\Q{\mathbb{Q}}

\DeclareMathOperator\Ker{Ker}

\def\id{\mathrm{Id}}

\def\csum{\mathbin{\#}}

\def\Bl{\mathop{B\ell}}
\def\rhot{\rho^{(2)}}

\def\cC{\mathcal{C}}

\def\Fix{\mbox{Fix}}
\def\tor{\mbox{torsion}}
\makeatletter



\begin{document}

\title[knot reversal]{Knot reversal and rational concordance}

\author{Taehee Kim}
\address{Department of Mathematics\\
  Konkuk University\\
  Seoul 05029\\
  Republic of Korea
}
\email{tkim@konkuk.ac.kr}


\def\subjclassname{\textup{2010} Mathematics Subject Classification}
\expandafter\let\csname subjclassname@1991\endcsname=\subjclassname
\expandafter\let\csname subjclassname@2000\endcsname=\subjclassname
\subjclass{%
  57M25, 
  57M27, 
  57N13, 
  57N70, 
}
\keywords{rational concordance, knot reverse, slice knot}

\begin{abstract}
We give an infinite family of knots that are not rationally concordant to their reverses. More precisely, if $\tau$ denotes the involution of the rational knot concordance group $\cC_\Q$ induced by string reversal and $\Fix(\tau)$ denotes the subgroup of knots fixed under $\tau$ in $\cC_\Q$, then $\cC_\Q/\Fix(\tau)$ contains an infinite rank subgroup. As a corollary, we show that there exists a knot $K$ such that for every pair of coprime integers $p$ and $q$, the $(p,q)$-cable of $K$ is not concordant to the reverse of the $(p,q)$-cable of $K$. 
\end{abstract}
\maketitle

\section{Introduction}\label{section:introduction}

For an oriented knot $K$ in $S^3$, the {\it reverse} of $K$ is the knot obtained by reversing the string orientation of $K$. In some earlier literature, it was called the inverse of $K$; in this paper the {\it inverse} of $K$ is the mirror image of $K$ with reversed string orientation, which is denoted by $-K$.  

Let $\tau(K)$ denote the reverse of $K$. It is not easy to distinguish a knot $K$ from $\tau(K)$ since manifolds obtained from $K$ and $\tau(K)$ are diffeomorphic: the $p/q$-surgeries on $K$ and $\tau(K)$ are diffeomorphic, and the $n$-fold branched cyclic covers of $S^3$ over $K$ and $\tau(K)$ are diffeomorphic for each $n$. The infinite cyclic covers of the complements of $K$ and $\tau(K)$ in $S^3$ are also diffeomorphic. Nonetheless, Trotter \cite{Trotter:1963-1} proved that there exists a knot that is not equivalent to its reverse. 

It is a more subtle problem to distinguish a knot from its reverse up to concordance. Using extensions of Casson-Gordon invariants \cite{Gilmer:1983-1} and two-fold branched covers of knots, Livingston \cite{Livingston:1983-1} showed that there are knots that are not concordant to their reverses. (Later a gap was found in the poof  of a statement that is given in \cite{Gilmer:1983-1} and used in \cite{Livingston:1983-1}. It is known that the proof of \cite{Livingston:1983-1} can be completed using three-fold branched covers of knots, instead.) Kirk and Livingston \cite{Kirk-Livingston:1999-3} showed that the knot $8_{17}$ is not concordant to its reverse using twisted Alexander polynomials, which are also extensions of Casson-Gordon invariants. Recently, Livingston and the author \cite{Kim-Livingston:2019-1} showed that there are {\it topologically slice} knots that are not concordant to their reverses.

In this paper, we give the first example of knots that are not {\it rationally} concordant to their reverses. Recall that two knots $K_0$ and $K_1$ are {\it concordant} if there is a properly embedded smooth annulus in $S^3\times [0,1]$ with boundary $K_1\times\{1\}\sqcup -K_0\times \{0\}$. Two knots $K_1$ and $K_0$ in $S^3$ are {\it rationally concordant} if there exist a rational homology cobordism $W^4$ with boundary $S^3\times \{1\}\sqcup -S^3\times \{0\}$ and a properly embedded smooth annulus $\Sigma$ in $W$ with $\partial \Sigma = K_1\times \{1\} \sqcup -K_0\times \{0\}$. Said differently, two knots are rationally concordant if they are concordant in a rational homology cobordism. A knot $K$ in $S^3$ is {\it rationally slice} if it is rationally concordant to the unknot. Equivalently, A knot is rationally slice if and only it bounds a smoothly embedded disk in a rational homology 4-ball with boundary $S^3$. Note that a knot $K$ is rationally concordant to $\tau(K)$ if and only if $K\csum -\tau(K)$ is rationally slice.

Rational concordance is an equivalence relation, and modulo rational concordance of knots in $S^3$, we obtain the {\it rational knot concordance group} under connected sum, which we denote by $\cC_\Q$. The involution $\tau$ induces an involution on $\cC_\Q$, which we denote by $\tau$ as well. Let $\Fix(\tau)$ denote the subgroup of knots fixed under $\tau$ in $\cC_\Q$. The following is our main theorem.

\begin{theorem}\label{theorem:main}
The group $\cC_\Q/\Fix(\tau)$ contains an infinite rank subgroup. In particular, there exist knots that are not rationally concordant to their reverses.
\end{theorem}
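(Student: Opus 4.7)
Since a knot $K$ is rationally concordant to $\tau(K)$ exactly when $K\csum -\tau(K)$ is rationally slice, the theorem reduces to producing an infinite family $\{K_n\}_{n\ge 1}$ of knots such that the classes $[K_n\csum -\tau(K_n)]\in \cC_\Q$ are $\Z$-linearly independent. My plan is to build each $K_n$ as a satellite over a fixed seed $J_0$ whose reverse is detected by a higher-order signature, using winding-number-zero patterns $P_n$ keyed to distinct primes $p_n$, and then to separate them via $L^2$-$\rho$ (amenable signature) invariants that vanish on rationally slice knots in the sense of Cha.

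First, I would fix $J_0$ (for instance, the Kirk--Livingston knot $8_{17}$, or a Kim--Livingston example, possibly connect-summed with a tuning knot) so that a chosen metabelian $\rho$-invariant of $J_0\csum -\tau(J_0)$ at some prime-power character is nonzero; the fact that such a seed exists is essentially the input supplied by earlier work. Next, for each prime $p_n$, I would design a winding-number-zero pattern $P_n$ in a solid torus whose infection curve has order $p_n$ in the first homology of the Alexander cover, and set $K_n:=P_n(J_0)$. The standard satellite formula for $\rho$-invariants then expresses $\rho(K_n\csum -\tau(K_n))$ at a character of order $p_n$ as a sum of Tristram--Levine-type signatures of $J_0\csum -\tau(J_0)$ evaluated at $p_n$-th roots of unity, which is nonzero by the choice of $J_0$.

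For the linear independence, suppose $\csumover{i=1}^N a_i(K_i\csum -\tau(K_i))$ is rationally slice with $a_{i_0}\ne 0$ minimal. Cha's amenable signature theorem adapted to the rational concordance category then forces the associated $\rho$-invariant at characters of order $p_{i_0}$ to vanish, while the satellite formula shows this invariant picks up only the $i_0$-summand -- a contradiction. The main obstacle will be controlling the cross terms: ensuring that each pattern $P_j$ with $j\ne i_0$ contributes zero at characters of order $p_{i_0}$. This is typically arranged by a covering-link / induction argument on satellite depth, with the primes $p_n$ chosen to grow sufficiently fast and the patterns $P_n$ set in Cha's mixed-coefficient framework so that characters of order $p_{i_0}$ restrict trivially through the other summands. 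A secondary subtlety is verifying that the amenable signature theorem applies in the rational -- rather than merely integral -- concordance setting; this is precisely the sort of refinement (using prime-power $p$-adic coefficients) that one must extract from Cha's machinery and check is compatible with the satellite constructions above.
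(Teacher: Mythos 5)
Your opening reduction (show the classes of $K\csum-\tau(K)$ are independent in $\cC_\Q$, i.e.\ not rationally slice) is the same as the paper's, and the general toolkit — satellite constructions plus von Neumann $\rhot$-invariants with a quantifier over self-annihilating submodules — is the right ballpark. But your first step begs the question. You take as ``input supplied by earlier work'' a seed $J_0$ for which some metabelian invariant of $J_0\csum-\tau(J_0)$ is nonzero \emph{as a rational sliceness obstruction}. No such seed was known: producing the first knot not rationally concordant to its reverse is exactly the content of the theorem. The obstructions of Livingston, Kirk--Livingston and Kim--Livingston are ordinary concordance obstructions, and none of them is known to survive passage to rational concordance, where the landscape changes drastically (for instance the figure-eight knot is rationally slice despite its nontrivial Alexander polynomial). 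So the nonvanishing at the bottom of your satellite tower is unsupported, and your proposal contains no mechanism explaining why reversal is detected at all. In the paper that mechanism is where all the work lies: the base knot $R=9_{46}$ has $H_1(M_R;\Q[s^{\pm1}])\cong\Q[s^{\pm1}]/\langle 2s-1\rangle\oplus\Q[s^{\pm1}]/\langle s-2\rangle$, the reverse-mirror operation interchanges the two coprime summands, and the infections $J_\alpha$, $J_\beta$ sit asymmetrically on the two bands; one then shows that \emph{every} self-annihilating submodule of $H_1(M_{K\csum-\tau(K)};\Q[t^{\pm1}])$ contains, after multiplication by $t^c-2$, a nonzero element supported on the $\langle 2t^c-1\rangle$-summands, which forces a vanishing but manifestly nontrivial combination of $\rhot_0(J_\beta)$ and $-\rhot_0(J_\alpha)$ — a contradiction.

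The second gap is that what you call a ``secondary subtlety'' — whether the obstruction theory applies to rational rather than integral concordance — is in fact the central technical difficulty, and the prime-power-character, branched-cover framework you propose is poorly adapted to it. A rational concordance comes with an unknown complexity $c$: the meridian maps to $t^c$, the coefficient system becomes $\Q[t^{\pm1}]$ via $s\mapsto t^c$, and the behavior of finite branched covers and of which finite-order characters extend over the 4-manifold is not controlled as in the integral case; this is precisely why the paper uses Cha's metabelian $\rhot$-obstruction on the infinite cyclic cover (Theorem~\ref{theorem:obstruction}) rather than Casson--Gordon-type invariants, and why it only has to track the base change of the Alexander module and Blanchfield form. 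Your scheme of keying each $K_n$ to a prime $p_n$ and separating summands by character orders would have to be rebuilt from scratch in this setting, with no guarantee that the relevant orders survive $s\mapsto t^c$; the paper sidesteps all of this because $2t^c-1$ and $t^c-2$ remain coprime for every $c$, a special feature of the chosen polynomials. Note also that infinite rank does not require distinct primes or growth conditions: the paper reuses the single pattern $R$ and chooses infection knots $J_\alpha^i$, $J_\beta^i$ whose $\rhot_0$-values are linearly independent over $\Z$ (Cochran--Orr--Teichner), which makes the cross-term bookkeeping in the linear-independence step essentially trivial.
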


Cochran, Davis, and Ray \cite[Corollary~5.3]{Cochran-Davis-Ray:2014-1} showed that for coprime integers $p$ and $q$ and knots $K_0$ and $K_1$, the $(p,q)$-cable of $K_0$ is concordant to the $(p,q)$-cable of $K_1$ in a $\Z[1/p]$-homology $S^3\times [0,1]$ if and only if $K_0$ is concordant to $K_1$ in a $\Z[1/p]$-homology $S^3\times [0,1]$. Since the reverse of the $(p,q)$-cable of a knot $K$ is equivalent to the $(p,q)$-cable of the reverse of $K$, combined with the above observation Theorem~\ref{theorem:main} immediately gives the following corollary.

\begin{corollary}\label{corollary:main}
	There exists a knot $K$ such that for every pair of coprime integers $p$ and $q$, the $(p,q)$-cable of $K$ is not concordant to the reverse of the $(p,q)$-cable of $K$. 
\end{corollary}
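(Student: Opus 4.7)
The plan is to combine Theorem~\ref{theorem:main} with the cabling theorem of Cochran--Davis--Ray recalled in the paragraph above. First, I would invoke Theorem~\ref{theorem:main} to pick a single knot $K$ whose class in $\cC_\Q/\Fix(\tau)$ is nonzero; that is, $K$ is not rationally concordant to $\tau(K)$. I claim that this one knot $K$ witnesses the corollary simultaneously for every coprime pair $(p,q)$.

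Fix such a pair and write $C_{p,q}(K)$ for the $(p,q)$-cable of $K$. I would argue by contradiction: suppose $C_{p,q}(K)$ is concordant in $S^3\times[0,1]$ to its reverse $\tau(C_{p,q}(K))$. Since cabling commutes with string reversal, $\tau(C_{p,q}(K))$ is isotopic to $C_{p,q}(\tau(K))$, so $C_{p,q}(K)$ and $C_{p,q}(\tau(K))$ are concordant in $S^3\times[0,1]$. Because $S^3\times[0,1]$ is in particular a $\Z[1/p]$-homology cobordism, the Cochran--Davis--Ray theorem then produces a concordance from $K$ to $\tau(K)$ inside some $\Z[1/p]$-homology cobordism with boundary $S^3\sqcup -S^3$. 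Any such cobordism is a rational homology cobordism (since $\Z[1/p]\subset\Q$ coefficients agree up to tensoring), so $K$ and $\tau(K)$ are rationally concordant, contradicting the choice of $K$.

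Since $(p,q)$ was arbitrary, this proves the corollary. There is no real obstacle once Theorem~\ref{theorem:main} is in hand---the deduction is purely formal, resting only on the cited compatibility between $(p,q)$-cabling and string reversal and on the Cochran--Davis--Ray cabling theorem. All of the genuine mathematical work is packaged inside Theorem~\ref{theorem:main}; the role of the corollary is simply to observe that rational sliceness is strong enough to be preserved under the transfer principle between cables and their companions.
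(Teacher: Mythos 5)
Your proposal is correct and follows essentially the same route as the paper: choose $K$ from Theorem~\ref{theorem:main}, use that the reverse of the $(p,q)$-cable is the $(p,q)$-cable of the reverse, and apply the Cochran--Davis--Ray cabling theorem, noting that a concordance in $S^3\times[0,1]$ is in particular one in a $\Z[1/p]$-homology cobordism and that any $\Z[1/p]$-homology cobordism is a rational homology cobordism. Nothing further is needed.
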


Kearton \cite{Kearton:1989-1} showed that for every knot $K$, the knot $K\csum -\tau(K)$ is a (negative) mutant of the slice knot $K\csum -K$. Therefore the knots in \cite{Livingston:1983-1, Kirk-Livingston:1999-3} that are not concordant to their reverses give us examples of knots that are not concordant to their (negative) mutants. Since the knots $K$ in Theorem~\ref{theorem:main} are not {\it rationally} concordant to their reverses, we obtain the following corollary that extends the aforementioned result to rational concordance.

\begin{corollary}\label{corollary:mutation}
There exists a knot $K$ that is not rationally concordant to a (negative) mutant of $K$. 
\end{corollary}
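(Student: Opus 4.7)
The plan is to deduce Corollary~\ref{corollary:mutation} almost formally from Theorem~\ref{theorem:main} together with Kearton's observation that $J \csum -\tau(J)$ is a negative mutant of $J \csum -J$ for any knot $J$. First I would invoke Theorem~\ref{theorem:main} to fix a knot $J$ that is not rationally concordant to $\tau(J)$; equivalently, $J \csum -\tau(J)$ is not rationally slice. I would then set $K := J \csum -J$, which is smoothly slice, and aim to show that this particular $K$ is not rationally concordant to at least one of its negative mutants.

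By Kearton's theorem, $J \csum -\tau(J)$ is a negative mutant of $K = J \csum -J$. To conclude that $K$ and $J \csum -\tau(J)$ are not rationally concordant, I would observe that their connected sum difference
\[
  K \csum -(J \csum -\tau(J)) \;=\; (J \csum -J) \csum (-J \csum \tau(J))
\]
is smoothly concordant to $-J \csum \tau(J)$, since $J \csum -J$ is slice. The latter knot is rationally slice if and only if $J$ is rationally concordant to $\tau(J)$, which has been arranged to fail. Hence $K$ is not rationally concordant to its negative mutant $J \csum -\tau(J)$.

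No genuinely new obstruction has to be developed here: the hard work lies in Theorem~\ref{theorem:main}, and the content of the corollary is simply that, via Kearton's construction and the additive structure of $\cC_\Q$, a failure of rational concordance between $J$ and $\tau(J)$ translates into a failure of rational concordance between a slice knot and one of its mutants. The only point requiring any care is the choice of base knot: the knot from Theorem~\ref{theorem:main} does not itself come equipped with an explicit mutant against which to compare, whereas $K = J \csum -J$ does, namely $J \csum -\tau(J)$.
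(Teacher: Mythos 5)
Your proof is correct and follows essentially the same route as the paper: invoke Theorem~\ref{theorem:main} to get $J$ not rationally concordant to $\tau(J)$, use Kearton's result that $J\csum-\tau(J)$ is a negative mutant of the slice knot $J\csum-J$, and conclude that $J\csum-J$ (rationally slice) cannot be rationally concordant to $J\csum-\tau(J)$ (not rationally slice). Your remark about taking $K=J\csum-J$ as the base knot is exactly the intended reading of the paper's argument.
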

\noindent Put differently, mutation may change the rational concordance class of a knot.

We note that Kirk and Livingston \cite{Kirk-Livingston:1999-3, Kirk-Livingston:1999-1} showed that there are knots that are not concordant to their positive mutants. The author does not know whether or not the knots in Corollary~\ref{corollary:mutation} are rationally concordant to their positive mutants, and the following natural question arises:

\begin{question}\label{question:positive-mutant}
	Does there exist a knot that is not rationally concordant to its positive mutant?
\end{question}

To prove Theorem~\ref{theorem:main} we use the von Neumann $\rhot$-invariant (see Theorem~\ref{theorem:obstruction}) that was introduced as a knot concordance invariant by Cochran, Orr, and Teichner \cite{Cochran-Orr-Teichner:2003-1} and later developed further as a rational concordance invariant by Cha \cite{Cha:2007-1} (see Section~\ref{section:obstructions}). In this paper, we do not use {\it higher-order} $\rhot$-invariants but only use $\rhot$-invariants associated with {\it metabelian} representations (see Remark~\ref{remark:metabelian}), which have many analogies with Casson-Gordon invariants. For example, Casson-Gordon invariants use $\Q/\Z$-valued linking forms on finite cyclic covers of $S^3$ branched over a knot, and metabelian $\rhot$-invariants use the (rational) Blanchfield form on the infinite cyclic cover of the 0-surgery on a knot in $S^3$. 

Compared with Casson-Gordon invariants, an advantage of using metabelian $\rhot$-invariants, which we take in this is paper, is that we are required to deal with only one cover (the infinite cyclic cover), whereas Casson-Gordon invariants might necessitate considering infinitely many finite covers; as {\it rational} sliceness obstructions, metabelian $\rhot$-invariants still use the infinite cyclic cover only, and to resolve the technical difficulties in the setting of rational concordance we only need to track down changes of module structures on the Alexander modules and the corresponding Blanchfield forms. (See Section~\ref{section:obstructions}.)

We note that the von Neumann $\rhot$-invariants are topological concordance invariants, and therefore all our results in this paper are also available in the topological rational knot concordance group. This observation gives us the following question.

\begin{question}\label{question:topologically slice}
	Does there exist a {\it topologically slice} knot that is not rationally concordant to its reverse?
\end{question}

This paper is organized as follows. In Section~\ref{section:obstructions} we review the metabelian $\rhot$-invariants and rational sliceness obstructions (Theorem~\ref{theorem:obstruction}). In Section~\ref{section:single-example} we give a single knot that  is not rationally concordant to its reverse, giving the key ideas of the proof of Theorem~\ref{theorem:main}. Finally, we prove Theorem~\ref{theorem:main} in Section~\ref{section:proof}.

In this paper all manifolds are oriented and homology groups are understood with integer coefficients unless mentioned otherwise. 

\subsection*{Acknowledgments} 
The author thanks Charles Livingston for helpful comments and conversations. 

\section{Rational sliceness obstructions}\label{section:obstructions}
In this section, we review rational sliceness obstructions given in \cite{Cochran-Orr-Teichner:2003-1,Cha:2007-1}; all the details of materials given in this section can be found in \cite[Section~5]{Cha:2007-1}. 

For a knot $K$, let $M_K$ denote the 0-surgery on $K$ in $S^3$. If $K$ is rationally slice, then there exists a 4-manifold $W$ with $H_*(W;\Q)\cong H_*(S^1;\Q)$ and $\partial W=M_K$. For, if $K$ bounds a smooth disk $D$ in a rational homology 4-ball $V$, the exterior $V\setminus \nu D$, where $\nu D$ is the open tubular neighborhood of $D$ in $V$, gives the desired $W$. In particular, $H_1(M_K;\Q)\cong H_1(W;\Q)\cong \Q$. Note that the group $H_1(M_K)$ has no torsion, and the group $H_1(M_K)=H_1(M_K)/\tor$ is isomorphic to $\Z=\langle s\rangle$ where the generator $s$ is represented by a meridian of $K$. The group $H_1(W)/\tor$ is also isomorphic to $\Z$ and we denote a generator by $t$. That is, $H_1(W)/\tor\cong \Z=\langle t\rangle$. 

The inclusion-induced homomorphism $i_*\colon H_1(M_K)\to H_1(W)/\tor$ sends $s$ to $t^c$ for some nonzero integer $c$, and we may assume $c$ is positive by changing orientaions if necessary. Then, we call the positive integer $c$ the {\it complexity} of $V$ (and $W$). We also say that $K$ is {\it slice in $V$ of complexity $c$}.

Suppose that $K$ is slice in a rational homology 4-ball $V$ of complexity $c$. Then for the 4-manifold $W$ as defined above and the inclusion $i\colon M_K\hookrightarrow W$, we obtain a representation $\epsilon_c\colon \pi_1M_K\to \langle s\rangle \xrightarrow{i_*}\langle t\rangle$ whose image is $\langle t^c\rangle$. Via this representation, we obtain a $\Q[t^{\pm 1}]$-module $H_1(M_K;\Q[t^{\pm 1}])$. Since $s$ is mapped to $t^c$ under $i_*$, we may consider $\Q[t^{\pm 1}]$ as a $\Q[s^{\pm 1}]$-module where $s$ acts as multiplication by $t^c$ and we have the isomorphism
\[
H_1(M_K;\Q[t^{\pm 1}]) \cong H_1(M_K;\Q[s^{\pm 1}])\otimes_{\Q[s^{\pm 1}]} \Q[t^{\pm 1}] 
\]
where $H_1(M_K;\Q[s^{\pm 1}])$ is the rational Alexander module of $K$.

On $ H_1(M_K;\Q[t^{\pm 1}])$, we have the (nonsingular) Blanchfield form 
\[
\Bl\nolimits_c\colon H_1(M_K;\Q[t^{\pm 1}])\otimes H_1(M_K;\Q[t^{\pm 1}]) \to \Q(t)/\Q[t^{\pm 1}]. 
\]
For the rational Blanchfield form 
\[
\Bl\colon H_1(M_K;\Q[s^{\pm 1}])\otimes H_1(M_K;\Q[s^{\pm 1}]) \to \Q(s)/\Q[s^{\pm 1}]
\]
and $x, y\in H_1(M_K;\Q[s^{\pm 1}])$ and $f(t),g(t)\in \Q[t^{\pm 1}]$, we have 
\[
\Bl\nolimits_c(x\otimes f(t), y\otimes g(t)) = f(t)\cdot h(\Bl(x,y))\cdot g(t^{-1})
\] 
where the map $h\colon \Q(s)/\Q[s^{\pm 1}] \to \Q(t)/\Q[t^{\pm 1}]$ is induced from $s\mapsto t^c$.

Notice that $\Z=\langle t\rangle$ acts on $\Q(t)/\Q[t^{\pm 1}]$ by multiplication and from this action we obtain a semidirect product $\left(\Q(t)/\Q[t^{\pm 1}]\right)\rtimes \Z$.

Let $\epsilon\colon \pi_1M_K\to H_1(M_K)=\Z=\langle s\rangle$ be the abelianization. For each $y\in \pi_1M_K$, let $\epsilon'(y)=n$ if $\epsilon(y)=s^n$. Let $\mu$ be a meridian of $K$ such that $\epsilon(\mu)=s$. Then for each $y\in \pi_1M_K$, we obtain a homology class $y\mu^{-\epsilon'(y)} \in H_1(M_K;\Q[s^{\pm 1}])$. Now for each $x\in H_1(M_K;\Q[t^{\pm 1}])$, we obtain a representation 
\[
\phi_x\colon \pi_1M_K\to \Q(t)/\Q[t^{\pm 1}]\rtimes \langle t\rangle
\] 
defined by $\phi_x(y)= \left(\Bl_c(x,(y\mu^{-\epsilon'(y)})\otimes 1), \epsilon_c(y)\right)$ and the von-Neumann $\rhot$-invariant $\rhot(K,\phi_x)\in \R$ associated with $\phi_x$. (For a definition of the von-Neumann $\rhot$-invariant, which we will not need in this paper, refer to \cite[Section~5]{Cochran-Orr-Teichner:2003-1} and \cite[Sections~2 and 3]{Cha:2007-1}.)

\begin{remark}\label{remark:metabelian}
The representation $\phi_x$ is called {\it metabelian} since it factors through the second derived quotient of $\pi_1M_K$: let $\pi=\pi_1M_K$. Let $\pi^{(1)}=[\pi,\pi]$, the commutator subgroup of $\pi$, and let $\pi^{(2)}=[\pi^{(1)},\pi^{(1)}]$. Then we have a short exact sequence
\[
0\to \pi^{(1)}/ \pi^{(2)} \to \pi/ \pi^{(2)}\to \pi/ \pi^{(1)} \to 0.
\]
Since $\pi/ \pi^{(1)}\cong H_1(M_K)\cong \Z=\langle s \rangle$, the above short exact sequence splits, and it follows that the second derived quotient $\pi/ \pi^{(2)}$ is isomorphic to $\pi^{(1)}/ \pi^{(2)} \rtimes \langle s\rangle$.  Since $ \pi^{(1)}/ \pi^{(2)}\cong H_1(M_K;\Z[s^{\pm 1}])$, we obtain an isomorphism $\pi/ \pi^{(2)} \cong H_1(M_K;\Z[s^{\pm 1}])\rtimes \langle s\rangle$. Then, for a map $i_c\colon \langle s\rangle \to \langle t\rangle$ defined by $i_c(s^n)=t^{cn}$, the above representation $\phi_x$ is the composition of maps
\begin{align*}
\pi \to \pi/ \pi^{(2)} 	&\xrightarrow{\cong} H_1(M_K;\Z[s^{\pm 1}])\rtimes \langle s\rangle \\
				&\xrightarrow{\phantom{\cong}} H_1(M_K;\Q[s^{\pm 1}])\rtimes \langle s\rangle \\
				&\xrightarrow{(\otimes 1, i_c)} H_1(M_K;\Q[t^{\pm 1}])\rtimes \langle t\rangle \\
				&\xrightarrow{(\Bl_c(x,-), \id)} \Q(t)/\Q[t^{\pm 1}] \rtimes \langle t\rangle.
\end{align*}
The representation $\phi_x$ and the von-Neumann $\rhot$-invariant associated with $\phi_x$ were first introduced in \cite{Cochran-Orr-Teichner:2003-1} for the case $c=1$ and in \cite{Cha:2007-1} for the case $c>1$. In \cite{Cochran-Orr-Teichner:2003-1, Cha:2007-1} they also introduce non-metabelian representations, which do not factor through $\pi/\pi^{(2)}$, and the corresponding {\it higher-order} $\rhot$-invariants, but we do not use them in this paper. 
\end{remark}

For a $\Q[t^{\pm 1}]$-submodule $P$ of $H_1(M_K;\Q[t^{\pm 1}])$, let 
\[
P^\perp=\{x\in H_1(M_K;\Q[t^{\pm 1}])\,\mid\, \Bl\nolimits_c(x,y)=0 \mbox{ for all }y\in P\}.
\]
We say that $P$ is {\it self-annihilating} if $P=P^\perp$. The following is our rational sliceness obstruction which will be used in the proof of Theorem~\ref{theorem:main}.

\begin{theorem}[{\cite[Theorem~4.6]{Cochran-Orr-Teichner:2003-1} for $c=1$ and \cite[Theorem~5.13]{Cha:2007-1} for $c>1$}]\label{theorem:obstruction}
 Suppose that a knot $K$ in $S^3$ is slice in a rational homology 4-ball $V$ of complexity $c$. Let $t$ denote the generator of the group $H_1(V\setminus \nu D)/\tor$ where $D$ is a slice disk for $K$ in $V$. Then, there exists a self-annihilating submodule $P$ of $H_1(M_K;\Q[t^{\pm 1}])$ such that $\rhot(K,\phi_x)=0$ for all $x\in P$.
\end{theorem}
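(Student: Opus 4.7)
The plan is to use the slice disk $D$ in $V$ to construct a natural cobordism $W$, exhibit the desired self-annihilating submodule $P$ as the kernel of inclusion into $W$ in twisted homology, and then deduce the vanishing of $\rhot(K,\phi_x)$ from the fact that $\phi_x$ extends over $\pi_1 W$ when $x\in P$.

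\textbf{Construction of $W$.} First I would set $W=V\sm \nu D$, where $\nu D$ is an open tubular neighborhood of $D$. Mayer--Vietoris applied to $V=W\cup \nu D$ yields $H_\ast(W;\Q)\cong H_\ast(S^1;\Q)$, and $\d W=M_K$. The inclusion $M_K\hookrightarrow W$ sends the generator $s$ of $H_1(M_K)/\tor$ to $t^c$, so the twisted coefficient system $\Q[t^{\pm 1}]$ with $s$ acting by $t^c$ is consistent on $M_K$ and on $W$, and $\epsilon_c$ is defined on $\pi_1W$ by construction.

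\textbf{Construction of $P$.} Next I would define
\[
P \;=\; \ker\bigl(i_\ast\colon H_1(M_K;\Q[t^{\pm 1}])\to H_1(W;\Q[t^{\pm 1}])\bigr)
\]
and prove $P=P^\perp$ with respect to $\Bl_c$. The argument combines Poincaré--Lefschetz duality on $W$ with $\Q[t^{\pm 1}]$-coefficients, universal coefficients over the PID $\Q[t^{\pm 1}]$, and the long exact sequence of the pair $(W,M_K)$. The key input is that after localizing at $\Q(t)$ the twisted homology of $W$ vanishes in the relevant degrees (because $H_\ast(W;\Q)$ is concentrated in the $\langle t\rangle$-invariant part), which forces the twisted intersection pairing on $W$ to be nondegenerate and makes $P$ a Lagrangian for $\Bl_c$ by the familiar ``half lives, half dies'' computation.

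\textbf{Vanishing of $\rhot$.} For $x\in P$, I would lift $\phi_x$ to a representation $\Phi_x\colon \pi_1W\to \Q(t)/\Q[t^{\pm 1}]\rtimes \langle t\rangle$. The $\langle t\rangle$-component is just $\epsilon_c$, which is already defined on $\pi_1W$; the $\Q(t)/\Q[t^{\pm 1}]$-component is given by pairing with $x$ via the Blanchfield form extended over $W$, and the condition $x\in P$ is exactly what makes this pairing descend to a well-defined function on $\pi_1 W$. Because the target group is amenable and lies in Strebel's class $D(\Q)$, the Cheeger--Gromov/Cochran--Orr--Teichner formula expresses $\rhot(M_K,\phi_x)$ as an $L^2$-signature defect for $(W,\Phi_x)$. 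Since $W$ has the rational homology of $S^1$, both the ordinary and $L^2$ twisted intersection forms on $H_2$ vanish, and the defect is zero.

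\textbf{Main obstacle.} The chief difficulty is the module-theoretic bookkeeping when $c>1$: the rational Alexander module $H_1(M_K;\Q[s^{\pm 1}])$ is naturally a $\Q[s^{\pm 1}]$-module, while $\Bl_c$ lives on its base change $H_1(M_K;\Q[t^{\pm 1}])$ along $s\mapsto t^c$. One must verify that Poincaré duality on $W$, the naturality of the Blanchfield form under the map $h\colon \Q(s)/\Q[s^{\pm 1}]\to \Q(t)/\Q[t^{\pm 1}]$ built into $\Bl_c$, and the half-lives-half-dies count all survive this base change; this is precisely the content of \cite[Section~5]{Cha:2007-1}. Once that is in hand, the remaining structure of the proof follows the classical template of \cite[Theorem~4.6]{Cochran-Orr-Teichner:2003-1}.
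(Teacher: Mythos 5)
Your outline is correct and follows essentially the same route as the sources the paper cites for this statement (the paper itself does not reprove it, quoting \cite[Theorem~4.6]{Cochran-Orr-Teichner:2003-1} and \cite[Theorem~5.13]{Cha:2007-1}): set $W=V\sm\nu D$, take $P=\ker\bigl(H_1(M_K;\Q[t^{\pm 1}])\to H_1(W;\Q[t^{\pm 1}])\bigr)$, prove $P=P^\perp$ by twisted Poincar\'e--Lefschetz duality and the half-lives-half-dies count, extend $\phi_x$ over $\pi_1W$ exactly when $x\in P$, and conclude $\rhot(K,\phi_x)=0$ from the $L^2$-signature defect of the rational homology circle $W$, with the $c>1$ base-change bookkeeping handled as in Cha. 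No substantive differences from the cited proofs to report.
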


\section{A single example}\label{section:single-example}
In this section, to show the key ideas of the proof of Theorem~\ref{theorem:main} more clearly, we construct a single example $K$ and show that it is not rationally concordant to its reverse. 

For a knot $K$ in $S^3$ and the abelianization $\epsilon\colon \pi_1M_K\to H_1(M_K)=\Z=\langle s\rangle$, for brevity let $\rhot_0(K)=\rhot(K,\epsilon)$. By \cite[Lemma~5.3]{Cochran-Orr-Teichner:2004-1}, we have $\rhot_0(K)=\int_{S^1} \sigma_K(z)\, dz$ where $\sigma_K$ denotes the Levine-Tristram signature function for $K$ and $S^1$ is normalized to length one (that is, $\int_{S^1}dz=1$). Let $J_\alpha$ and $J_\beta$ be two knots such that $\rhot_0(J_\alpha)\ne 0, \rhot_0(J_\beta)\ne 0$, and $\rhot_0(J_\alpha)\ne \rhot_0(J_\beta)$. As a specific choice, we let  $J_\alpha$ and $J_\beta$ be the left-handed trefoil and the right-handed trefoil, respectively. (Then $\rhot_0(J_\alpha)=2/3$ and $\rhot_0(J_\beta)=-2/3$.) 

\begin{figure}[htb!]
	\includegraphics{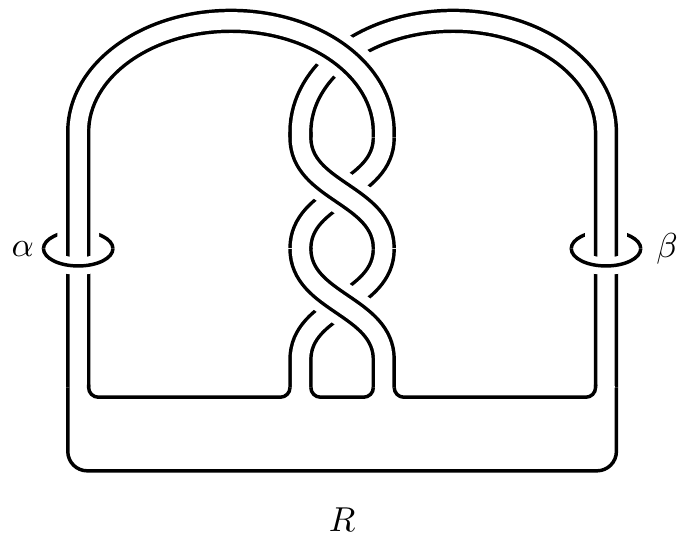}
	\caption{The knot $R$}\label{figure:knot-R}
\end{figure}

Let $R$ be the knot $9_{46}$ as depicted in Figure~\ref{figure:knot-R}. In Figure~\ref{figure:knot-R}, the curve $\alpha$ is a curve dual to the left band of the obvious Seifert surface of $R$ and $\beta$ is a curve dual to the right band of the Seifert surface. For knots $J_\alpha$ and $J_\beta$, let us denote by $R(\alpha, \beta; J_\alpha, J_\beta)$ the knot obtained via the satellite construction which ties the knot $J_\alpha$ (respectively, $J_\beta$) through the band dual to the curve $\alpha$ (respectively, $\beta$).

\begin{theorem}\label{theorem:single-example} Let $K=R(\alpha, \beta; J_\alpha, J_\beta)$. The knot $K$ is not rationally concordant to $\tau(K)$. That is, $K$ is nontrivial in $\cC_\Q/\Fix(\tau)$.
\end{theorem}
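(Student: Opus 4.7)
The plan is to argue by contradiction. Suppose $K$ is rationally concordant to $\tau(K)$; then $J := K \csum -\tau(K)$ is rationally slice, bounding a smooth disk in some rational homology $4$-ball of complexity $c$. By Theorem~\ref{theorem:obstruction} there is a self-annihilating submodule $P \subset H_1(M_J;\Q[t^{\pm 1}])$ with $\rhot(J,\phi_x)=0$ for every $x\in P$. I will produce an $x \in P$ with $\rhot(J,\phi_x)$ an explicit nonzero rational, giving the desired contradiction.

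First I would pin down the rational Alexander module and Blanchfield form of $J$. Since $\alpha$ and $\beta$ are null-homologous in $S^3\setminus R$, the satellite construction does not alter the rational Alexander module, so $H_1(M_K;\Q[s^{\pm 1}])\cong \Q[s^{\pm 1}]/(2s-1)\oplus \Q[s^{\pm 1}]/(s-2)$, and a direct Seifert-matrix computation for $R=9_{46}$ shows that in $K$ the curve $\alpha$ generates the $(2s-1)$-summand and $\beta$ generates the $(s-2)$-summand, while in $\tau(K)$ (and hence in $-\tau(K)$) these two roles are interchanged. After the change of coefficients $s\mapsto t^c$, the module $H_1(M_J;\Q[t^{\pm 1}])$ decomposes into its $(2t^c-1)$- and $(t^c-2)$-primary parts; because these two primes are complex conjugate but distinct, the Blanchfield form vanishes on each primary part individually and nondegenerately pairs the two. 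Consequently any self-annihilating $P$ splits as $P=P_+\oplus P_-$ along this decomposition, and a short linear-algebra argument shows that a nonzero $x\in P_+$ must have nonzero projection onto both the $K$- and the $-\tau(K)$-summands of the $(2t^c-1)$-primary part.

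Next I would carry out the $\rhot$-calculation. For such an $x$, the Blanchfield pairing shows that $\phi_x$ is trivial on $\alpha_K$ and on the $(2t^c-1)$-generator in $-\tau(K)$ (which lie in the same primary part as $x$), but nontrivial on $\beta_K$ and on the $(t^c-2)$-generator in $-\tau(K)$. Applying the satellite formula for the metabelian $\rhot$-invariant summand by summand, using $\rhot(-\tau(K),\cdot)=-\rhot(K,\cdot)$ under the orientation-reversing identification $M_{-\tau(K)}=-M_K$, and observing that $m(J_\alpha)$ is the right-handed trefoil while $m(J_\beta)$ is the left-handed trefoil (so $\rhot_0(m(J_\alpha))=-2/3$ and $\rhot_0(m(J_\beta))=2/3$), I obtain
\[
\rhot(J,\phi_x)=\bigl(\rhot(R,\phi'_R)-\rhot(R,\phi''_R)\bigr)+\bigl(\rhot_0(J_\beta)-\rhot_0(J_\alpha)\bigr),
\]
where $\phi'_R$ and $\phi''_R$ are the representations on $R$ induced by $\phi_x\vert_K$ and $\phi_x\vert_{-\tau(K)}$ respectively. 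With the choice $\rhot_0(J_\alpha)=2/3$ and $\rhot_0(J_\beta)=-2/3$, the infection bracket equals $-4/3$. To conclude, since $R=9_{46}$ is ribbon and since the representations $\phi'_R,\phi''_R$ factor through elements of a self-annihilating submodule of $A_R$ singled out by the primary structure of $P$, Theorem~\ref{theorem:obstruction} applied to $R$ forces $\rhot(R,\phi'_R)=\rhot(R,\phi''_R)=0$; therefore $\rhot(J,\phi_x)=-4/3\neq 0$, which contradicts the conclusion of Theorem~\ref{theorem:obstruction} and shows $K$ is not rationally concordant to $\tau(K)$.

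The main obstacle I expect is the bookkeeping inside the satellite formula: precisely tracking how the involutions $\tau$ and $m$ permute the two primary summands, how they flip the signs of the $\rhot_0$-contributions, and above all matching the induced representations $\phi'_R,\phi''_R$ to self-annihilating submodules of $A_R$ so that one may legitimately apply Theorem~\ref{theorem:obstruction} to $R$ itself. The case $c>1$ adds a further wrinkle, as $(2t^c-1)$ and $(t^c-2)$ may themselves factor over $\Q[t^{\pm 1}]$, requiring the primary-decomposition analysis to be carried out prime by prime; the structural logic, however, is unchanged.
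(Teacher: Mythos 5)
Your overall strategy is the paper's strategy (contradiction via Theorem~\ref{theorem:obstruction}, the decomposition of $H_1(M_{K\csum-\tau(K)};\Q[t^{\pm 1}])$ into the $(2t^c-1)$- and $(t^c-2)$-torsion parts with the roles of $\alpha,\beta$ swapped on the $-\tau(K)$ side, and the satellite computation giving contributions $\rhot_0(J_\beta)$ and $-\rhot_0(J_\alpha)$), but there is a genuine gap at the step you call a ``short linear-algebra argument.'' Self-annihilation does \emph{not} force a nonzero element of the $(2t^c-1)$-primary part of $P$ to project nontrivially onto both the $K$- and the $-\tau(K)$-summands, nor does it force that primary part of $P$ to be nonzero at all. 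For instance, $\langle \alpha\otimes 1\rangle \oplus \langle \alpha'\otimes 1\rangle$ is self-annihilating (the self-pairings vanish because $\Bl(\alpha,\alpha)=0=\Bl(\alpha',\alpha')$, cross-terms between the two connect-summands vanish, and a degree count on orders gives $P=P^\perp$), yet its $(2t^c-1)$-part lies entirely in the $K$-summand; likewise the whole $(t^c-2)$-primary part $\langle\beta\otimes1\rangle\oplus\langle\alpha'\otimes1\rangle$ is self-annihilating, so your $P_+$ could even be zero. What the abstract structure of $P$ gives you is only \emph{one} nonzero coordinate after cleaning up: take a nonzero $x\in P$, multiply by $t^c-2$ (or by $2t^c-1$, using the orientation/reversal symmetry of $L$ when $P$ happens to be purely $(t^c-2)$-primary) to reach $x=(a,0,0,b')$ with $a\neq 0$ and $b'$ possibly zero. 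This is repairable rather than fatal, because the resulting value is $-2/3$ when $b'=0$ and $-4/3$ when $b'\neq 0$, both nonzero; but your formula, which presumes both infection terms are present and always outputs $-4/3$, is not justified as written.

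A second problem is your appeal to Theorem~\ref{theorem:obstruction} ``applied to $R$'' to kill $\rhot(R,\phi'_R)$ and $\rhot(R,\phi''_R)$. That theorem is existential: a slice knot admits \emph{some} self-annihilating submodule on which the $\rhot$-invariants vanish; it does not say that $\rhot$ vanishes for every representation built from every self-annihilating submodule, so knowing that $\phi'_R$ ``factors through elements of a self-annihilating submodule singled out by $P$'' proves nothing by itself. The correct route is to exhibit the specific (ribbon) disk whose kernel submodule contains the element defining the representation and to check that the representation extends over its exterior: in the paper one works with $R'=R(\alpha;J_\alpha)$, which is slice in $D^4$ with $\Ker\{H_1(M_{R'};\Q[s^{\pm1}])\to H_1(D^4\setminus\nu D';\Q[s^{\pm1}])\}=\langle\alpha\rangle$, notes $\Bl(\alpha,\alpha)=0$ so the representation extends, and then quotes \cite[Theorem~4.2]{Cochran-Orr-Teichner:2003-1}. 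Relatedly, the reduction from the $\Q[t^{\pm1}]$-level representation $\phi_{(a,0)}$ (with $a=\alpha\otimes f(t)$, $c>1$) to the $\Q[s^{\pm1}]$-level representation $\phi_\alpha$ is not automatic bookkeeping: it uses injectivity of the localization and multiplication-by-$f(t)$ maps together with the subgroup property of $\rhot$ (Lemma~\ref{lemma:subgroup property}), which is exactly the $c>1$ ``wrinkle'' your sketch defers but never resolves.
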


\begin{proof}
Suppose that $K$ is rationally concordant to $\tau(K)$ and let $L=K\# -\tau(K)$. Then, $L$ bounds a slice disk $D$ in a rational homology 4-ball $V$ of complexity $c$ for some positive integer $c$. Let $W=V\setminus \nu D$. Then $\partial W=M_L$ and the inclusion-induced map $H_1(M_L)=\Z=\langle s\rangle \to H_1(W)/\tor=\Z=\langle t\rangle$ sends $s$ to $t^c$. Now we have 
\[
H_1(M_K;\Q[s^{\pm 1}]) \cong H_1(M_R;\Q[s^{\pm 1}])\cong \frac{\Q[s^{\pm 1}]}{\langle 2s-1 \rangle} \oplus \frac{\Q[s^{\pm 1}]}{\langle s-2 \rangle}
\] 
where we may assume that $\alpha$ (respectively, $\beta$) generates $\Q[s^{\pm 1}]/\langle 2s-1\rangle$ (respectively, $\Q[s^{\pm 1}]/\langle s-2\rangle$). 

When we take the inverse of $R$, we reverse both the orientation of the ambient space $S^3$ and the string orientation of $R$, and therefore meridians of $R$ are inverted twice. It follows that an oriented meridian of $R$ is also an oriented meridian of $-R$, and $M_R$ and $M_{-R}$ have diffeomorphic infinite cyclic covers with the same deck transformations. On the other hand, meridians of $R$ are inverted once for taking the reverse of $R$, and therefore $M_R$ and $M_{\tau(R)}$ have diffeomorphic infinite cyclic covers with the inverted deck transformations. Consequently, $M_R$ and $M_{-\tau(R)}$ have diffeomorphic infinite cyclic covers with the inverted deck transformations. Noticing that $-\tau(K) = (-\tau(R))(\alpha', \beta'; -\tau(J_\alpha), -\tau(J_\beta))$ where $\alpha'$ and $\beta'$ denote the images of $\alpha$ and $\beta$ for $-\tau(R)$, respectively, we can deduce that
\[
H_1(M_{-\tau(K)};\Q[s^{\pm 1}]) \cong H_1(M_{-\tau(R)};\Q[s^{\pm 1}])\cong \frac{\Q[s^{\pm 1}]}{\langle s-2 \rangle} \oplus \frac{\Q[s^{\pm 1}]}{\langle 2s-1 \rangle}
\]
where $\alpha'$ (respectively, $\beta'$) generates $\Q[s^{\pm 1}]/\langle s-2\rangle$ (respectively, $\Q[s^{\pm 1}]/\langle 2s-1\rangle$). 

Finally, we have 
\begin{align*}
H_1(M_L;\Q[s^{\pm 1}])
&\cong H_1(M_K;\Q[s^{\pm 1}]) \oplus H_1(M_{-\tau(K)};\Q[s^{\pm 1}])\\
&\cong \frac{\Q[s^{\pm 1}]}{\langle 2s-1 \rangle} \oplus  \frac{\Q[s^{\pm 1}]}{\langle s-2 \rangle} \oplus \frac{\Q[s^{\pm 1}]}{\langle s-2 \rangle} \oplus \frac{\Q[s^{\pm 1}]}{\langle 2s-1 \rangle}\\
&= \langle \alpha\rangle \oplus \langle \beta\rangle \oplus \langle \alpha'\rangle \oplus \langle \beta'\rangle.
\end{align*}

Since $H_1(M_L;\Q[t^{\pm 1}])\cong H_1(M_L;\Q[s^{\pm 1}])\otimes_{\Q[s^{\pm 1}]} \Q[t^{\pm 1}]$ and $s$ is mapped to $t^c$, we have 
\begin{align*}
	H_1(M_L;\Q[t^{\pm 1}]) 	&\cong \frac{\Q[t^{\pm 1}]}{\langle 2t^c-1 \rangle} \oplus  \frac{\Q[t^{\pm 1}]}{\langle t^c-2 \rangle} \oplus \frac{\Q[t^{\pm 1}]}{\langle t^c-2 \rangle} \oplus \frac{\Q[t^{\pm 1}]}{\langle 2t^c-1 \rangle}\\
	&= \langle \alpha\otimes 1\rangle \oplus \langle \beta\otimes 1\rangle \oplus \langle \alpha'\otimes 1\rangle \oplus \langle \beta'\otimes 1\rangle.
\end{align*}

By Theorem~\ref{theorem:obstruction}, there exists a self-annihilating submodule $P$ of $H_1(M_L;\Q[t^{\pm 1}])$ such that $\rhot(L,\phi_x)=0$ for each $x\in P$ and the representation $\phi_x\colon \pi_1M_K\to \Q(t)/\Q[t^{\pm 1}]\rtimes \langle t\rangle$ associated with $x$. Since $P=P^\perp$ and the Blanchfield form $\Bl_c$ is nonsingular, the submodule $P$ is nontrivial. 

Let $x=(a,b,a',b')$ be a nontrivial element in $P$. Suppose that $a\ne 0$ in $\Q[t^{\pm 1}]/\langle 2t^c-1\rangle = \langle \alpha\otimes 1\rangle$. We may assume this by changing orientations if necessary. Since $P$ is a $\Q[t^{\pm 1}]$-submodule and $x\in P$, the element $(t^c-2)\cdot x$ also lies in $P$. Also notice that $2t^c-1$ and $t^c-2$ are mutually prime in $\Q[t^{\pm 1}]$, and therefore $(t^c-2)\cdot a$ is nontrivial in $\Q[t^{\pm 1}]/\langle 2t^c-1\rangle = \langle \alpha\otimes 1\rangle$. Therefore, by letting $t^c-2$ act on $x$ if necessary, we may assume $x=(a,0,0,b')\in P$ and $a\ne 0$.

From a standard cobordism argument (for example, see \cite[pages 1750103-7 and 1750103-8]{Kim:2017-1}), it follows that the representation $\phi_x$ induces representations 
\[
\phi_{(a,0)}\colon \pi_1M_K\to \Q(t)/\Q[t^{\pm 1}]\rtimes \langle t\rangle \mbox{ and } \phi_{(0,b')}\colon \pi_1M_{-\tau(K)}\to \Q(t)/\Q[t^{\pm 1}]\rtimes \langle t\rangle
\] 
and
\[
\rhot(L,\phi_x)= \rhot(K,\phi_{(a,0)})+\rhot(-\tau(K),\phi_{(0,b')}).
\]
Since $x=(a,0,0,b')\in P$, it follows that $\rhot(L,\phi_x)=0$, and therefore 
\[
\rhot(K,\phi_{(a,0)})+\rhot(-\tau(K),\phi_{(0,b')}) = 0.
\]
We will show that $\rhot(K,\phi_{(a,0)})=-2/3$ and $\rhot(K,\phi_{(0,b')})=0$ or $-2/3$, which will contradict the above equation and finish the proof. 

We show that $\rhot(K,\phi_{(a,0)})=-2/3$. Since $\alpha\otimes 1$ generates $\Q[t^{\pm 1}]/\langle 2t^c-1\rangle$ and $a\ne 0$, we may write $a=(\alpha\otimes 1)\cdot f(t) = \alpha\otimes f(t)$ for some $f(t)\in \Q[t^{\pm 1}]$ with $f(t)\notin \langle 2t^c-1\rangle$.  Then, as shown in Section~\ref{section:obstructions}, it follows that 
\[
\phi_{(a,0)}(y)= \left(\Bl\nolimits_c(\alpha\otimes f(t),(y\mu^{-\epsilon'(y)})\otimes 1), \epsilon_c(y)\right).
\]
Recall that the map
\[
\phi_\alpha\colon \pi_1M_K\to \Q(s)/\Q[s^{\pm 1}]\rtimes \langle s\rangle 
\] 
is defined by 
\[
\phi_\alpha(y)= \left(\Bl(\alpha,y\mu^{-\epsilon'(y)}), \epsilon(y)\right).
\]

We will show that $\rhot(K,\phi_{(a,0)})=\rhot(K,\phi_\alpha)$. This fact essentially follows from Lemma~\ref{lemma:subgroup property} below, and we will give more details. (Also refer to \cite[Theorem~5.16]{Cha:2007-1}).)

\begin{lemma}[{\cite[(2.3) (subgroup property) on p.108]{Cochran-Orr-Teichner:2004-1}}]\label{lemma:subgroup property} Let $M$ be a 3-manifold and let $\Gamma$ and $\Gamma'$ be groups. If $\phi\colon \pi_1M\to \Gamma'$ is a homomorphism and $h\colon \Gamma'\to \Gamma$ is an injective homomorphism, then $\rhot(M,\phi) = \rhot(M,h\circ \phi)$.
\end{lemma}

First, we observe that the map $\phi_{(a,0)}\colon \pi_1M_K\to \Q(t)/\Q[t^{\pm 1}]\rtimes \langle t\rangle$ is decomposed as $\phi_{(a,0)} = j\circ f_*\circ \bar{h}\circ {\psi_\alpha}$ where the maps $\psi_\alpha$, $\bar{h}$, $f_*$, and $j$ are defined as follows:

\begin{enumerate}
	\item Let $S=\{(2s-1)^n\mid n\in \Z\}$, a multiplicative closed subset of $\Q[s^{\pm 1}]$, and let $S^{-1}\Q[s^{\pm 1}]$ be the localization of $\Q[s^{\pm 1}]$ by $S$. Since $\alpha$ is $(2s-1)$-torsion in $H_1(M_K;\Q[s^{\pm1 }])$, the image of $\Bl(\alpha, -)$ lies in $S^{-1}\Q[s^{\pm 1}]/\Q[s^{\pm 1}]$. It follows that for the canonical inclusion
	\[
	i\colon S^{-1}\Q[s^{\pm 1}]/\Q[s^{\pm 1}]\rtimes \langle s\rangle\to \Q(s)/\Q[s^{\pm 1}]\rtimes \langle s\rangle 
	\]
	 there is a map
	\[
	\pi_1M_K\to S^{-1}\Q[s^{\pm 1}]/\Q[s^{\pm 1}]\rtimes \langle s\rangle
	\]
	whose composition with the map $i$ is $\phi_\alpha$. We denote the map by $\psi_\alpha$.  That is, $\phi_\alpha=i\circ \psi_\alpha$.
	\item Let $S' =\{(2t^c-1)^n\mid n\in \Z\}$, a multiplicative closed subset of $\Q[t^{\pm 1}]$. We define a map  
	\[
	\bar{h}\colon S^{-1}\Q[s^{\pm 1}]/\Q[s^{\pm 1}]\rtimes \langle s\rangle \to S'^{-1}\Q[t^{\pm 1}]/\Q[t^{\pm 1}]\rtimes \langle t\rangle
	\]
	to be the map induced from $s\mapsto t^c$. That is, $\bar{h}(p(s)+\Q[s^{\pm 1}], s^n)=(p(t^c)+\Q[t^{\pm 1}], t^{cn})$ for $p(s)\in S^{-1}\Q[s^{\pm 1}]$ and $n\in \Z$.
	\item We define a map
	\[
	f_*\colon S'^{-1}\Q[t^{\pm 1}]/\Q[t^{\pm 1}]\rtimes \langle t\rangle \to S'^{-1}\Q[t^{\pm 1}]/\Q[t^{\pm 1}]\rtimes \langle t\rangle
	\] 
	by $f_*(p(t)+\Q[t^{\pm 1}], t^n) = (f(t)p(t)+\Q[t^{\pm 1}], t^n)$ for $p(t)\in S'^{-1}\Q[t^{\pm 1}]$ and $n\in \Z$.
	\item We define a map
	\[
	j\colon S'^{-1}\Q[t^{\pm 1}]/\Q[t^{\pm 1}]\rtimes \langle t\rangle \to \Q(t)/\Q[t^{\pm 1}]\rtimes \langle t\rangle
	\] to be the canonical inclusion.	
\end{enumerate}
One can easily see that $\phi_{(a,0)} = j\circ f_*\circ \bar{h}\circ \psi_\alpha$ and that the maps $\bar{h}$ and $j$ are injective. Since $f(t)\notin \langle 2t^c-1\rangle$ and $2t^c-1$ is irreducible in $\Q[t^{\pm 1}]$, the map $f_*$ is also injective. It follows that the composition $j\circ f_* \circ \bar{h}$ is injective. Now by Lemma~\ref{lemma:subgroup property}, it follows that $\rhot(K,\phi_{(a,0)}) = \rhot(K,\psi_\alpha)$. Since the map $i$ in (1) above is also injective and $\phi_\alpha=i\circ\psi_\alpha$, again by Lemma~\ref{lemma:subgroup property} we have $\rhot(K,\psi_\alpha)=\rhot(K,\phi_\alpha)$. Therefore, $\rhot(K,\phi_{(a,0)}) = \rhot(K,\phi_\alpha)$. 

We compute $\rhot(K,\phi_\alpha)$. Let $R'=R(\alpha;J_\alpha)$. Then $K=R'(\beta;J_\beta)$. Since $K$ and $R'$ have isomorphic Alexander modules and Blanchfield forms, the map $\phi_\alpha$ induces a map 
\[
\phi_\alpha'\colon \pi_1M_{R'}\to \Q(s)/\Q[s^{\pm 1}]\rtimes \langle s\rangle
\] 
defined in the same way as $\phi_\alpha$ on $\pi_1M_K$.

From another standard cobordism argument (for example, see \cite[Lemma~3.3]{Kim:2017-1}, \cite[Lemma~5.22]{Cha:2007-1}, and \cite[Lemma~2.3]{Cochran-Harvey-Leidy:2009-01}), it follows that the map $\phi_\alpha$ induces a map $\phi_{J_\beta}\colon \pi_1M_{J_\beta}\to \Z$ and
\[
\rhot(K, \phi_\alpha) = \rhot(R', \phi_\alpha') + \rhot(J_\beta, \phi_{J_\beta})
\]
where
\[
\rhot(J_\beta,\phi_{J_\beta})=
\begin{cases}
	0 & \mbox{if } \phi_{\alpha}(\beta) \mbox{ is trivial},\\[1ex]
	\rhot_0(J_\beta)=\int_{S^1} \sigma_{J_\beta}(z) dz=-\frac{2}{3} & \mbox{otherwise}.
	
\end{cases}
\]
Note that $R'$ is slice in the 4-ball $D^4$ since there is a surgery curve for a slice disk, say $D'$, on the ``$\alpha$-band" of the obvious Seifert surface for $R'$. Since 
\[
\Ker\{H_1(M_{R'};\Q[s^{\pm 1}])\to H_1(D^4\setminus \nu D';\Q[s^{\pm 1}])\}= \langle \alpha\rangle,
\] 
the map $\phi_\alpha'$ extends to $\pi_1D^4\setminus\nu D'$ if $\phi_\alpha'(\alpha)$ is trivial. Since $R'$ has a Seifert matrix $\begin{psmallmatrix}0 & 1\\ 2 & 0\end{psmallmatrix}$, for the Blanchfield form $\Bl'$ of $R'$, one can easily show that $\Bl'(\alpha, \alpha)=0$, and hence $\phi_\alpha'(\alpha)$ is trivial. Therefore, $\phi_\alpha'$ extends to $\pi_1D^4\setminus \nu D'$. By \cite[Theorem~4.2]{Cochran-Orr-Teichner:2003-1}, it follows that $\rhot(R', \phi_\alpha')=0$.

The knot $K$ has the same Seifert forms as $R'$ (and $R$), and one can easily show that $\Bl(\alpha,\beta)\ne 0$. It follows that $\phi_\alpha(\beta)$ is nontrivial and $\rhot(J_\beta,\phi_{J_\beta})=\rhot_0(J_\beta) = -2/3$. Therefore, \[
\rhot(K,\phi_{(a,0)}) = \rhot(K,\phi_\alpha)=0-2/3=-2/3.
\]

One can compute $\rhot(-\tau(K),\phi_{(0,b')})$ using similar arguments as above: if $b'\ne 0$, then we may write $b'=( \beta'\otimes 1)\cdot g(t)=\beta'\otimes g(t)$ for some $g(t)\notin \langle 2t^c-1\rangle$ and we obtain 
\[
\rhot(-\tau(K),\phi_{(0,b')})=\rhot_0(-\tau(J_\alpha))= - \rhot_0(\tau(J_\alpha))=-\rhot_0(J_\alpha)=-2/3.
\] 

On the other hand, if $b'=0$, then by (2.5) in \cite[p.108]{Cochran-Orr-Teichner:2004-1} it follows that  
\[
\rhot(-\tau(K),\phi_{(0,b')})=0.
\] 
Therefore,
\[
\rhot(-\tau(K),\phi_{(0,b')})=
\begin{cases}
	0 & \mbox{if } b'=0,\\[1ex]
	-\rhot_0(J_\alpha)=-\frac23 & \mbox{if } b'\ne 0.	
\end{cases}
\]

Now we have
\begin{align*}
	\rhot(L,\phi_x) &= \rhot(K,\phi_{(a,0)})+\rhot(-\tau(K),\phi_{(0,b')})\\
					&= -2/3\mbox{ or } -4/3,
\end{align*}
which contradicts that $\rhot(L,\phi_x)=0$.
\end{proof}

\section{Proof of Theorem~\ref{theorem:main}}\label{section:proof}
In this section we give a proof of Theorem~\ref{theorem:main}. For positive integers $i=1, 2, 3,\ldots$, let $J_\alpha^i$ and $J_\beta^i$ be knots such that $\{\rhot_0(J_\alpha^i),\, \rhot_0(J_\beta^i)\}_{i\ge 1}$ is a set of real numbers that are linearly independent over $\Z$. Such knots $J_\alpha^i$ and $J_\beta^i$ exist due to \cite[Proposition~2.6]{Cochran-Orr-Teichner:2004-1}. For each $i$ we let $K_i=R(\alpha, \beta; J_\alpha^i,J_\beta^i)$. Theorem~\ref{theorem:main} follows immediately from the following proposition.
\begin{proposition}\label{proposition:infinite-example}
	The knots $K_i$ are linearly independent in $\cC_\Q/\Fix(\tau)$.
\end{proposition}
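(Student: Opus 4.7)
The plan is to generalize Theorem~\ref{theorem:single-example} by combining the primary-decomposition structure of the Alexander module with the $\Z$-linear independence of the numbers $\rhot_0(J_\alpha^i),\rhot_0(J_\beta^i)$. Arguing by contradiction, suppose $\csumover{i} n_i K_i\in\Fix(\tau)$ in $\cC_\Q$ with some $n_{i_0}\ne 0$. Then $L:=\csumover{i} n_i(K_i\csum -\tau(K_i))$ is rationally slice in a rational homology $4$-ball $V$ of some complexity $c\ge 1$, and Theorem~\ref{theorem:obstruction} produces a self-annihilating $\Q[t^{\pm 1}]$-submodule $P\subseteq H_1(M_L;\Q[t^{\pm 1}])$ with $\rhot(L,\phi_x)=0$ for every $x\in P$. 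Exactly as in Section~\ref{section:single-example}, the Alexander module of $L$ is a direct sum, indexed by $i$ and $j\in\{1,\ldots,|n_i|\}$, of cyclic modules annihilated by either $2t^c-1$ or $t^c-2$, generated by $\alpha_{i,j},\beta_{i,j},\alpha'_{i,j},\beta'_{i,j}$ (with the roles adjusted for mirror copies when $n_i<0$).

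Since $2t^c-1$ and $t^c-2$ are coprime in $\Q[t^{\pm 1}]$, the primary decomposition $H_1(M_L;\Q[t^{\pm 1}])=V_+\oplus V_-$ (with $V_+$ the $(2t^c-1)$-primary part) restricts to $P=P_+\oplus P_-$ with $P_\pm=P\cap V_\pm$. Nondegeneracy of the Blanchfield form forces $P$ nontrivial, and hence at least one of $P_\pm$ is nonzero; by reversing orientations if necessary, I assume $P_+\ne 0$ and pick $0\ne x\in P_+$. The module $V_+$ is spanned precisely by the $\alpha_{i,j}$'s (from the $K_i$ or $-K_i$ copies) together with the $\beta'_{i,j}$'s (from the $-\tau(K_i)$ or $\tau(K_i)$ copies), so I may write
\[
x \;=\; \sum_{i,j}\bigl(a_{i,j}\,\alpha_{i,j}\;+\;b_{i,j}\,\beta'_{i,j}\bigr).
\]

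The core computation then proceeds summand by summand, exactly as in the proof of Theorem~\ref{theorem:single-example}: the standard cobordism splitting of $\rhot$ along connected sums (cf.~\cite[Lemma~3.3]{Kim:2017-1}) decomposes $\rhot(L,\phi_x)$ as a sum over the connected-sum pieces; for a piece coming from a $(\pm K_i)$-copy, the nonvanishing of $a_{i,j}$ together with $\Bl(\alpha,\alpha)=0$ makes $\phi_x(\alpha_{i,j})$ trivial and $\phi_x(\beta_{i,j})$ nonzero, so the $R'$-slice-disk-complement argument of Theorem~\ref{theorem:single-example} yields a contribution of $\sign(n_i)\,\rhot_0(J_\beta^i)$; a symmetric analysis shows that each nonzero $b_{i,j}$ contributes $-\sign(n_i)\,\rhot_0(J_\alpha^i)$ from a $(\mp\tau(K_i))$-summand, and every other band contributes zero. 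Letting $A_i=\#\{j:a_{i,j}\ne 0\}$ and $B_i=\#\{j:b_{i,j}\ne 0\}$, this yields
\[
0 \;=\; \rhot(L,\phi_x) \;=\; \sum_i \sign(n_i)\bigl(A_i\,\rhot_0(J_\beta^i)\;-\;B_i\,\rhot_0(J_\alpha^i)\bigr),
\]
and $\Z$-linear independence of $\{\rhot_0(J_\alpha^i),\rhot_0(J_\beta^i)\}_{i\ge 1}$ forces $A_i=B_i=0$ for every $i$, hence $x=0$, contradicting $x\ne 0$.

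The main obstacle is really the bookkeeping in the previous paragraph: one has to track carefully how the Blanchfield form behaves on mirror and reverse copies (so that the $(2t^c-1)$- versus $(t^c-2)$-primary decomposition, and the resulting sign $\sign(n_i)$, are correctly identified for each summand) and verify that the single-summand $\rhot$ formula of Theorem~\ref{theorem:single-example} applies cleanly to each of the $2\sum_i|n_i|$ summands of $L$; once that is in place, the $\Z$-linear independence of $\{\rhot_0(J_\alpha^i),\rhot_0(J_\beta^i)\}$ closes the proof immediately.
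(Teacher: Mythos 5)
Your proposal is correct and takes essentially the same route as the paper: reduce to rational sliceness of $L$, invoke Theorem~\ref{theorem:obstruction} to get a nontrivial self-annihilating submodule $P$, concentrate a nonzero element of $P$ in the $(2t^c-1)$-primary part, split $\rhot(L,\phi_x)$ along the connected-sum pieces via the standard cobordism argument, compute each piece as in Theorem~\ref{theorem:single-example}, and contradict the $\Z$-linear independence of $\{\rhot_0(J_\alpha^i),\rhot_0(J_\beta^i)\}$. Your primary decomposition $P=P_+\oplus P_-$ is just a repackaging of the paper's device of letting $t^c-2$ act on a chosen nonzero element, and your endgame (linear independence forces $A_i=B_i=0$, hence $x=0$) is the same contradiction the paper phrases by exhibiting a nontrivial coefficient.
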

\begin{proof}
	One can easily see that $K_i$ are linearly independent in $\cC_\Q/\Fix(\tau)$ if and only if $K_i\# -\tau(K_i)$ are linearly independent in $\cC_\Q$. (For example, see \cite[Lemma~4.4]{Kim-Livingston:2019-1}.)
	
	Let $L=\#_in_i\left(K_i\#-\tau(K_i)\right)$ be a finite linear combination of $K_i\#-\tau(K_i)$ with $n_i$ nonzero integers and suppose that $L$ bounds a slice disk $D$ in a rational homology 4-ball $V$ of complexity $c$. Let $W=V\setminus \nu D$. Then, the inclusion $M_L\hookrightarrow W$ induces the homomorphism $H_1(M_L)=\Z=\langle s\rangle \to H_1(W)/\tor=\Z=\langle t\rangle$ that sends $s$ to $t^c$. For $1\le j\le |n_i|$, let $\alpha_{i,j}$ and $\beta_{i,j}$ (respectively, $\alpha_{i,j}'$ and $\beta_{i,j}'$) be copies of $\alpha$ and $\beta$ for the $j$th copy of $K_i$ (respectively, $-\tau(K_i)$). As we observed in Section~\ref{section:single-example}, for a knot $K$, the spaces $M_K$ and $M_{-K}$ have diffeomorphic infinite cyclic covers with the same deck transformations, and $M_K$ and $M_{-\tau(K)}$ have diffeomorphic infinite cyclic covers with the inverted deck transformations. 
Therefore, we may assume that 
\begin{align*}
	H_1(M_L;\Q[s^{\pm 1}])
	&\cong \bigoplus_i|n_i|\left(H_1(M_{K_i};\Q[s^{\pm 1}]) \oplus H_1(M_{-\tau(K_i)};\Q[s^{\pm 1}])\right)\\
	&\cong \bigoplus_i|n_i|\left(\frac{\Q[s^{\pm 1}]}{\langle 2s-1 \rangle} \oplus  \frac{\Q[s^{\pm 1}]}{\langle s-2 \rangle} \oplus \frac{\Q[s^{\pm 1}]}{\langle s-2 \rangle} \oplus \frac{\Q[s^{\pm 1}]}{\langle 2s-1 \rangle}\right)\\
	&= \bigoplus_i\bigoplus_{1\le j\le |n_i|} \left(\langle \alpha_{i,j}\rangle \oplus \langle \beta_{i,j}\rangle \oplus \langle \alpha_{i,j}'\rangle \oplus \langle \beta_{i,j}'\rangle \right).
\end{align*}
Similarly as in Section~\ref{section:single-example}, $H_1(M_L;\Q[t^{\pm 1}])\cong H_1(M_L;\Q[s^{\pm 1}])\otimes_{\Q[s^{\pm 1}]} \Q[t^{\pm 1}]$ and we have 
\begin{align*}
	H_1(M_L;\Q[t^{\pm 1}])
	&\cong \bigoplus_i|n_i|\left(\frac{\Q[t^{\pm 1}]}{\langle 2t^c-1 \rangle} \oplus  \frac{\Q[t^{\pm 1}]}{\langle t^c-2 \rangle} \oplus \frac{\Q[t^{\pm 1}]}{\langle t^c-2 \rangle} \oplus \frac{\Q[t^{\pm 1}]}{\langle 2t^c-1 \rangle}\right)\\
	&= \bigoplus_i\bigoplus_{1\le j\le |n_i|} \left(\langle \alpha_{i,j}\otimes 1\rangle \oplus \langle \beta_{i,j}\otimes 1\rangle \oplus \langle \alpha_{i,j}'\otimes 1\rangle \oplus \langle \beta_{i,j}'\otimes 1\rangle \right).
\end{align*}

By Theorem~\ref{theorem:obstruction}, there exists a self-annihilating $\Q[t^{\pm 1}]$-submodule $P$ of $H_1(M_L;\Q[t^{\pm 1}])$ such that $\rhot(L,\phi_x)=0$ for each $x\in P$ and the metabelian representation $\phi_x\colon \pi_1M_L\to \Q(t)/\Q[t^{\pm 1}]\rtimes \langle t\rangle$ associated with $x$. Since $P$ is self-annihilating and the Blanchfield form $\Bl_c$ on $H_1(M_L;\Q[t^{\pm 1}])$ is nonsingular, the submodule $P$ is nontrivial. 

Let $x$ be a nontrivial element in $P$. We may write 
\[
x=\bigoplus_i\bigoplus_{1\le j\le |n_i|}(a_{i,j}, b_{i,j}, a_{i,j}', b_{i,j}') 
\]
and at least one of $a_{i,j}, b_{i,j}, a_{i,j}'$, and $b_{i,j}'$ is nonzero. By reordering $K_i$ and changing orientations if necessary, we may assume that $n_1>0$ and $a_{1,1}\ne 0$. Since $P$ is a $\Q[t^{\pm 1}]$-submodule, the element $(t^c-2)\cdot x$ lies in $P$. Since $2t^c-1$ and $t^c-2$ are mutually prime in $\Q[t^{\pm 1}]$, the element $(t^c-2)\cdot a_{1,1}$ is nonzero in $\Q[t^{\pm 1}]/\langle 2t^c-1\rangle = \langle \alpha_{1,1}\otimes 1\rangle$. Therefore, by letting $t^c-2$ act on $x$ if necessary, we may assume that $a_{1,1}\ne 0$ and $b_{i,j}=a_{i,j}'=0$ for all $i,j$. 

Using a standard cobordism argument as in Section~\ref{section:single-example} (refer to see \cite[Lemma~3.3]{Kim:2017-1}, \cite[Lemma~5.22]{Cha:2007-1}, and \cite[Lemma~2.3]{Cochran-Harvey-Leidy:2009-01}), we obtain representations 
\[
\phi_{(a_{i,j},0)}\colon \pi_1M_{K_i}\to \Q(t)/\Q[t^{\pm 1}]\rtimes \langle t\rangle 
\mbox{ and }
\phi_{(0,b_{i,j}')}\colon \pi_1M_{-\tau(K_i)}\to \Q(t)/\Q[t^{\pm 1}]\rtimes \langle t\rangle
\] 
for each $i,j$ and the equation
\[
\rhot(L,\phi_x)= \sum_i \sum_{j=1}^{|n_i|}\delta_i\left(\rhot\left(K_i,\phi_{(a_{i,j},0)}\right)+\rhot\left(-\tau(K_i),\phi_{(0,b_{i,j}')}\right)\right),
\]
where $\delta_i=1$ if $n_i>0$ and $\delta_i=-1$ if $n_i<0$. Since $x\in P$, it follows that $\rhot(L,\phi_x)=0$.

Each of $\rhot(K_i,\phi_{(a_{i,j},0)})$ and $\rhot(-\tau(K),\phi_{(0,b_{i,j}')})$ can be computed using the arguments as in Section~\ref{section:single-example} so that we obtain 
\[
\rhot(K_i,\phi_{(a_{i,j},0)})=
\begin{cases}
	0 & \mbox{if } a_{i,j}=0,\\[1ex]
	\rhot_0(J_\beta^i) & \mbox{if } a_{i,j}\ne 0,	
\end{cases}
\]
and similarly, 
\[
\rhot(-\tau(K_i),\phi_{(0,b_{i,j}')})=
\begin{cases}
	0 & \mbox{if } b_{i,j}'=0,\\[1ex]
	-\rhot_0(J_\alpha^i) & \mbox{if } b_{i,j}'\ne 0.	
\end{cases}
\]
Therefore, we have 
\[
0= \sum_i \sum_{j=1}^{|n_i|}\delta_i\left(\delta_{i,j}\rhot_0(J_\beta^i)-\delta_{i,j}'\rhot_0(J_\alpha^i))\right),
\]
where $\delta_i=\pm 1$ and $\delta_{i,j}$ and $\delta_{i,j}'$ are either 0 or 1. But since $n_1>0$ and $a_{1,1}\ne 0$, it follows that $\delta_1=\delta_{1,1}=1$. Therefore, the right hand side of the above equality is a nontrivial linear combination of $\rhot_0(J_\alpha^i)$ and $\rhot_0(J_\beta^i)$, and it contradicts that  $\{\rhot_0(J_\alpha^i),\, \rhot_0(J,\beta^i)\}_{i\ge 1}$ is a set of real numbers that are linearly independent over $\Z$.
\end{proof}

\bibliographystyle{amsalpha}
\def\MR#1{}
\bibliography{research}

\end{document}